\numberwithin{equation}{section}
\theoremstyle{plain}
\newtheorem{theorem}{Theorem}[section]
\newtheorem{proposition}[theorem]{Proposition}
\newtheorem{lemma}[theorem]{Lemma}
\theoremstyle{definition}
\newcommand\R{\mathbb R}
\newcommand\M{\mathbb M}
\newcommand\N{\mathbb N}
\newcommand\Oo{\mathcal{O}}
\newcommand\iO{\int_\Omega}
\newcommand\iQ{\int_Q}
\newcommand\PP{\mathbb{P}}
\newcommand\QQ{\mathbb{Q}}
\newcommand\rd{\R^d}
\newcommand\Rm{\R^m}
\newcommand\rl{\R^l}
\newcommand\limn{\lim_{n\to +\infty}}
\newcommand\liminfn{\liminf_{n\to +\infty}}
\newcommand\ep{\varepsilon}
\newcommand{\scal}[2]{\langle #1,\,#2 \rangle}
\newcommand\wk{\rightharpoonup}
\newcommand\Rn{\mathbb{R}^N}
\newcommand\pdeor{\mathscr{A}}
\newcommand{\A}{\mathbb{A}}
\newcommand{\cx}[0]{\mathcal{C}_{x}}
\newcommand{\qa}[0]{Q_{\pdeor(x)}}
\newcommand{\iq}[0]{\int_Q}
\newcommand\be[1]{\begin{equation}\label{#1}}
\newcommand\ee{\end{equation}}
\title[Relaxation for $\pdeor$-quasiconvexity with variable coefficients] {Relaxation of $p$-growth integral functionals under space-dependent differential constraints}
\author[E. Davoli] {Elisa Davoli} 
\address[Elisa Davoli]{Faculty of Mathematics\\ University of Vienna\\Oskar-Morgenstern Platz 1\\A-1090 Vienna, Austria}
\email[E. Davoli]{elisa.davoli@univie.ac.at}
\author[I. Fonseca] {Irene Fonseca} 
\address[Irene Fonseca]{Department of Mathematics\\ Carnegie Mellon University\\Forbes Avenue\\Pittsburgh PA 15213, USA}
\email[I. Fonseca]{fonseca@andrew.cmu.edu}
\subjclass[2010]{49J45; 35D99; 49K20}
\keywords{Relaxation, $\pdeor-$quasiconvexity}
\begin{document} 
\vskip .2truecm
\begin{abstract}
\small{A representation formula for the relaxation of integral energies
$$(u,v)\mapsto\iO f(x,u(x),v(x))\,dx,$$
is obtained, where $f$ satisfies $p$-growth assumptions, $1<p<+\infty$, and the fields $v$ are subjected to space-dependent first order linear differential constraints in the framework of $\pdeor$-quasiconvexity with variable coefficients. 
}
\end{abstract}
\maketitle

\section{Introduction}
The analysis of constrained relaxation problems is a central question in materials science. Many applications in continuum mechanics and, in particular, in magnetoelasticity, rely on the characterization of minimizers of non-convex multiple integrals of the type
$$
u\mapsto \int_{\Omega}f(x,u(x),\nabla u(x),\dots,\nabla^k u(x))\,dx
$$
or
\be{eq:ex2}
(u,v)\mapsto \int_{\Omega}f(x,u(x),v(x))\,dx,
\ee
where $\Omega$ is an open, bounded subset of $\R^N$, $u:\Omega\to \R^m,\,m\in \N$, and the fields $v:\Omega\to \R^d,\,d\in\N$, satisfy partial differential constraints of the type ``$\pdeor v=0$" other than ${\rm curl}\,v=0$ (see e.g. \cite{benesova.kruzik, fonseca.dacorogna}).\\

In this paper we provide a representation formula for the relaxation of non-convex integral energies of the form \eqref{eq:ex2}, in the case in which the energy density $f$ satisfies $p$-growth assumptions, and the fields $v$ are subjected to linear first-order space-dependent differential constraints.\\

The natural framework to study this family of relaxation problems is within the theory of $\pdeor$-quasiconvexity with variable coefficients. In order to present this notion, we need to introduce some notation. 

For $i=1\cdots,N$, let $A^i\in C^{\infty}(\R^N;\M^{l\times d})\cap W^{1,\infty}(\R^N;\M^{l\times d})$, let $1<p<+\infty$, and consider the differential operator 
$$\pdeor:L^p(\Omega;\R^d)\to W^{-1,p}(\Omega;\R^l),\quad d,l\in \N,$$
defined as
\be{eq:def-operator}\pdeor v:=\sum_{i=1}^N A^i(x)\frac{\partial v(x)}{\partial x_i}\ee
for every $v\in L^p(\Omega; \R^d)$, where \eqref{eq:def-operator} is to be interpreted in the sense of distributions. Assume that the symbol $\mathbb{A}:\R^N\times \R^N\to \M^{l\times d}$,

$$\mathbb{A}(x,w):=\sum_{i=1}^N A^i(x)w_i\quad\text{for }(x,w)\in \R^N\times \R^N,$$
satisfies the uniform constant rank condition (see \cite{murat})
  \begin{equation}
 \label{cr}
 \text{rank } \mathbb{A}(x,w)=r\quad\text{for every }x\in \R^N \text{and }w\in\mathbb{S}^{n-1}.
 \end{equation}
Let $Q$ be the unit cube in $\R^N$ with sides parallel to the coordinate axis, i.e.,
$$Q:=\Big(-\frac12,\frac12\Big).$$
Denote by $C^{\infty}_{\rm per}(\R^N;\R^m)$ the set of $\R^m$-valued smooth maps that are $Q$-periodic in $\R^N$,
and for every $x\in \Omega$ consider the set
$$\cx:=\Big\{w\in C^{\infty}_{\rm per}(\R^N;\R^m):\,\int_Q{w(y)\,dy}=0,\,\text{and } \sum_{i=1}^N A^i(x)\frac{\partial w(y)}{\partial y_i}=0 \Big\}.$$
Let $f:\Omega\times\R^m\times \R^d \to [0,+\infty)$ be a Carath\'eodory function. The $\pdeor-$quasiconvex envelope of $f(x,u,\cdot)$ for $x\in\Omega$ and $u\in\R^m$ is defined for $\xi\in \R^d$ as 
$$\qa f(x,u,\xi):=\inf\Big\{\iq f(x,u,\xi+w(y))\,dy:\,w\in\cx\Big\}.$$
We say that $f$ is $\pdeor-$quasiconvex if $f(x,u,\xi)=\qa f(x,u,\xi)$ for a.e. $x\in\Omega$, and for all $u\in\R^m$ and $\xi\in \R^d$.\\

The notion of $\pdeor$-quasiconvexity was first introduced by B. Dacorogna in \cite{dacorogna}, and extensively characterized in \cite{fonseca.muller} by I. Fonseca and S. M\"uller for operators $\pdeor$ defined as in \eqref{eq:def-operator}, satisfying the constant rank condition \eqref{cr}, and having constant coefficients, 
$$A^i(x)\equiv A^i\in \M^{l\times d}\quad\text{for every }x\in \R^N,\,i=1,\dots,N.$$
In that paper the authors proved (see \cite[Theorems 3.6 and 3.7 ]{fonseca.muller}) that under $p$-growth assumptions on the energy density $f$, $\pdeor$-quasiconvexity is necessary and sufficient for the lower-semicontinuity of integral functionals
$$I(u,v):=\int_{\Omega}f(x,u(x),v(x))\,dx\quad\text{for every }(u,v)\in L^p(\Omega;\R^m)\times L^p(\Omega;\R^d)$$
along sequences $(u^n,v^n)$ satisfying $u^n\to u$ in
measure, $v^n\wk v$ in $L^p(\Omega;\R^d)$, and $\pdeor v^n\to 0$ in $W^{-1,p}(\Omega)$. We remark that in the framework $\pdeor ={\rm curl}$, i.e., when $v^n=\nabla\phi^n$ for some	$\phi^n\in	W^{ 1,p} (\Omega; \R^m )$, $d=n\times m$, $\pdeor$-quasiconvexity reduces to Morrey's notion of quasiconvexity.\\

 The analysis of properties of $\pdeor-$quasiconvexity for operators with constant coefficients was extended in the subsequent paper \cite{braides.fonseca.leoni}, where A. Braides, I. Fonseca and G. Leoni provided an integral representation formula for relaxation problems under $p$-growth assumptions on the energy density, and 
 presented (via $\Gamma$-convergence) homogenization results for periodic integrands evaluated along $\pdeor-$free fields. These homogenization results were later generalized in \cite{fonseca.kromer}, where I. Fonseca and S. Kr\"omer worked under weaker assumptions on the energy density $f$. In \cite{kreisbeck.kromer, kreisbeck.rindler}, simultaneous homogenization and dimension reduction was studied in the framework of $\pdeor$-quasiconvexity with constant coefficients. Oscillations and concentrations generated by $\pdeor$-free mappings are the subject of \cite{fonseca.kruzik}. Very recently an analysis of the case in which the energy density is nonpositive has been carried out in \cite{kramer.kromer.kruzik.patho}, and applications to the theory of compressible Euler systems have been studied in \cite{chiodaroli.feireisl.kreml.wiedemann}. A parallel analysis for operators with constant coefficients and under linear growth assumptions for the energy density has been developed in \cite{arroyo, baia.chermisi.matias.santos, fonseca.muller.leoni,  matias.morandotti.santos}. A very general characterization in this setting has been obtained in \cite{arroyo.dephilippis.rindler}, following the new insight in \cite{dephilippis.rindler}.
 
 The theory of $\pdeor$-quasiconvexity for operators with variable coefficients has been characterized by P. Santos in \cite{santos}. Homogenization results in this setting have been obtained in \cite{davoli.fonseca} and \cite{davoli.fonseca2}.\\

This paper is devoted to proving a representation result for the relaxation of integral energies in the framework of $\pdeor$-quasiconvexity with variable coefficients. To be precise, let $1<p,q<+\infty$, $d,m,l\in \N$, and consider a Carath\'eodory function
 $f:\Omega\times\Rm\times \rd\to [0,+\infty)$ satisfying
$$\textrm{(H)}\quad 0\leq f(x,u,v)\leq C(1+|u|^p+|v|^q),\quad1<p,q<+\infty,$$
for a.e. $x\in\Omega$, and all $(u,v)\in\Rm\times\rd$, with $C>0$. 

Denoting by $\Oo(\Omega)$ the collection of open subsets of $\Omega$, for every $D\in\mathcal{O}(\Omega)$, $u\in L^p(\Omega;\R^m)$ and $v\in L^q(\Omega;\rd)$ with $\pdeor v=0$, we define
\begin{multline}
\label{eq:def-i-intro}
\mathcal{I}((u,v),D):=\inf\Big\{\liminf_{n\to +\infty}\int_{D}f(x,u_n(x),v_n(x)):\, u_n\to u\quad\text{strongly in } L^p(\Omega;\R^m),\\
v_n\wk v\quad\text{weakly in }L^q(\Omega;\R^d)\,\text{and }\pdeor v_n\to 0\quad\text{strongly in }W^{-1,q}(\Omega;\R^l)\Big\}.
\end{multline}
Our main result is the following.
\begin{theorem}
\label{thm:main}
Let $\pdeor$ be a first order differential operator with variable coefficients, satisfying \eqref{cr}.
Let $f:\Omega\times\Rm\times \rd\to [0,+\infty)$ be a Carath\'eodory function satisfying {\rm(H)}.
Then,
$$\int_D \qa f(x,u(x),v(x))\,dx=\mathcal{I}((u,v),D)$$
for all $D\in\Oo(\Omega), u\in L^p(\Omega;\Rm)$ and $v\in L^q(\Omega;\rd)$ with $\pdeor v=0$.
\end{theorem}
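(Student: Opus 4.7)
The plan is to establish the two inequalities
$$\mathcal{I}((u,v),D)\leq \int_D \qa f(x,u(x),v(x))\,dx\quad\text{and}\quad \int_D \qa f(x,u(x),v(x))\,dx\leq \mathcal{I}((u,v),D)$$
separately, following the blow-up scheme of Fonseca-M\"uller, modified to handle variable coefficients in the spirit of \cite{santos}. For the lower bound I would first show that $D\mapsto \mathcal{I}((u,v),D)$ extends to a Radon measure that is absolutely continuous with respect to the Lebesgue measure (using growth (H), the standard subadditivity/superadditivity arguments, and the De Giorgi-Letta criterion), and then bound its Radon-Nikodym derivative at a.e.\ $x_0$ from below by $\qa f(x_0,u(x_0),v(x_0))$ via blow-up. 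For the upper bound I would construct a recovery sequence by combining the definition of the $\pdeor$-quasiconvex envelope on a fine grid of points with a patching-and-correcting procedure that turns $\pdeor(x_j)$-free building blocks into an asymptotically $\pdeor$-free global field.

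\textbf{Lower bound via blow-up.} Given a near-optimal sequence $(u_n,v_n)$ for $\mathcal{I}((u,v),D)$, at a Lebesgue point $x_0$ of $u$, $v$ and of the Radon-Nikodym derivative, I would rescale on cubes $Q(x_0,r_n)$ with $r_n\to 0$. The rescaled fields $\tilde v_n$ converge weakly to the constant $v(x_0)$ in $L^q(Q;\rd)$ and satisfy $\pdeor_n\tilde v_n\to 0$ in $W^{-1,q}$, where $\pdeor_n$ has coefficients $A^i(x_0+r_n\cdot)$ converging uniformly to $A^i(x_0)$. The principal obstacle is to replace $\tilde v_n$ with a field $\hat v_n$ that is exactly $\pdeor(x_0)$-free, $Q$-periodic, and of zero mean, while still approximating the energy; once achieved, $\hat v_n\in\mathcal{C}_{x_0}$ (after mollification) is admissible in the envelope. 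I would invoke the projection operator constructed in \cite{santos}: under the uniform constant-rank assumption \eqref{cr}, there is a pseudo-projector onto $\ker\mathbb{A}(x_0,\cdot)$ whose correction is controlled by
$$\|\pdeor(x_0)\tilde v_n\|_{W^{-1,q}}\leq \|(\pdeor(x_0)-\pdeor_n)\tilde v_n\|_{W^{-1,q}}+\|\pdeor_n\tilde v_n\|_{W^{-1,q}},$$
both of which vanish as $r_n\to 0$. A $p$-equiintegrability step via a Fonseca-M\"uller decomposition lemma then ensures that this correction is energetically negligible, yielding
$$\liminfn\iq f(x_0,u(x_0),\hat v_n(y))\,dy \geq Q_{\pdeor(x_0)}f(x_0,u(x_0),v(x_0)),$$
which is the desired pointwise lower bound; the dependence on $u$ is handled by strong $L^p$-convergence and Scorza-Dragoni.

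\textbf{Upper bound.} For the recovery sequence I would first approximate $(u,v)$ by piecewise constant fields $(u^\delta,v^\delta)$ on a cubic partition $\{Q_j\}$ of $D$; the piecewise constant $v^\delta$ need not be $\pdeor$-free, which is acceptable because $\mathcal{I}$ only requires $\pdeor v_n\to 0$ in $W^{-1,q}$. For each $Q_j=x_j+\e Q$ I would use the envelope's definition to pick $w_j\in \mathcal{C}_{x_j}$ with
$$\iq f(x_j,u^\delta(x_j),v^\delta(x_j)+w_j(y))\,dy\leq Q_{\pdeor(x_j)}f(x_j,u^\delta(x_j),v^\delta(x_j))+\delta,$$
rescale and periodize $w_j$ on $Q_j$, and sum. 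The patched field is $\pdeor(x_j)$-free on each $Q_j$ but only asymptotically $\pdeor$-free on $D$, and once again a projection correction as in \cite{santos} fixes this at vanishing energetic cost. A diagonal argument in $\delta,\e\to 0$, combined with continuity of $\qa f$ in $x$ (a consequence of the smoothness of the $A^i$ and standard qualitative properties of the envelope), produces $v_n\wk v$ in $L^q$ with $\pdeor v_n\to 0$ in $W^{-1,q}$ and
$$\limsup_{n\to +\infty}\int_D f(x,u,v_n)\,dx\leq \int_D\qa f(x,u(x),v(x))\,dx;$$
choosing $u_n:=u$ handles the strong convergence of the first component. The coefficient-freezing step is the main technical hurdle in both directions, and the principal novelty with respect to the constant-coefficient relaxation result of \cite{braides.fonseca.leoni}.
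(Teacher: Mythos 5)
Your lower bound coincides with the paper's argument (Steps 1--3 of the proof): reduction to $q$-equiintegrable sequences via the decomposition lemma, the De Giorgi--Letta/Radon-measure step, blow-up at a Lebesgue point, the commutator estimate showing $\|(\pdeor(x_0+r_k\cdot)-\pdeor(x_0))\hat w_k\|_{W^{-1,s}}\to 0$ so that the coefficients can be frozen, and finally the constant-coefficient projection onto $\ker\mathbb{A}(x_0,\cdot)$ combined with equiintegrability. That part is sound and is essentially what the paper does.

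Your upper bound, however, takes a genuinely different route, and it is where the gap lies. The paper never builds a global recovery sequence on $D$: having shown that $\mathcal{I}((u,v),\cdot)$ is a measure absolutely continuous with respect to $\mathcal{L}^N$, it only needs the pointwise density estimate $\tfrac{d\mathcal{I}((u,v),\cdot)}{d\mathcal{L}^N}(x_0)\leq \qa[x_0] f(x_0,u(x_0),v(x_0))$ at a.e.\ \emph{fixed} $x_0$, which is obtained from a \emph{single} test field $w$ with $\pdeor(x_0)w=0$, localized by a cutoff, oscillated at scale $r/m$, and corrected by the pseudo-differential projection $P_\eta$; the energetic harmlessness of the correction is not automatic and requires the dedicated equiintegrability statement (Proposition \ref{prop:same-limit}), since $\|v^r_m-z^r_m\|_{L^q}\to 0$ alone does not control $\int f$ differences under (H). Your grid-patching construction instead requires passing from $\sum_j \qa[x_j] f(x_j,u^\delta(x_j),v^\delta(x_j))\,|Q_j|$ to $\int_D \qa f(x,u(x),v(x))\,dx$, and this needs (semi)continuity of $x\mapsto Q_{\pdeor(x)}f(x,u,\xi)$. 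That is not a ``standard qualitative property of the envelope'' here: the admissible class $\mathcal{C}_{x}$ itself varies with $x$ through $\ker\mathbb{A}(x,\cdot)$, so a test field for $\mathcal{C}_{x_j}$ is not admissible at nearby points, and establishing such continuity is essentially as hard as the variable-coefficient difficulty you are trying to resolve (it would itself require a projection-plus-equiintegrability argument, uniformly over the grid). In addition, your patched field has jumps across the cube boundaries $\partial Q_j$ that contribute to $\pdeor v_n$ in $W^{-1,q}$ and must be removed by boundary-layer cutoffs before the projection estimates of Proposition \ref{prop:santos-properties} can be applied. As written, the upper bound is therefore incomplete; the localization-first strategy of the paper is precisely what allows one to avoid comparing envelopes at different base points.
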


Adopting the ``blow-up" method introduced in \cite{fonseca.muller93}, the proof of the theorem consists in showing that the functional $\mathcal{I}((u,v),\cdot)$ is the trace of a Radon measure absolutely continuous with respect to the restriction of the Lebesgue measure $\mathcal{L}^N$ to $\Omega$, and proving that for a.e. $x\in\Omega$ the Radon-Nicodym derivative $\frac{d\mathcal{I}((u,v)\cdot)(x)}{d\mathcal{L}^N}$ coincides with the $\pdeor-$quasiconvex envelope of $f$.

 The arguments used are a combination of the ideas from \cite[Theorem 1.1]{braides.fonseca.leoni} and from \cite{santos}. The main difference with \cite[Theorem 1.1]{braides.fonseca.leoni}, which reduces to our setting in the case in which the operator $\pdeor$ has constant coefficients, is in the fact that while defining the operator $\mathcal{I}$ in \eqref{eq:def-i-intro} we can not work with exact solutions of the PDE, but instead we need to study sequences of asymptotically $\pdeor-$vanishing fields. As pointed out in \cite{santos}, in the case of variable coefficients the natural framework is the context of pseudo-differential operators. In this setting, we don't know how to project directly onto the kernel of the differential constraint, but we are able to construct an ``approximate" projection operator $P$ such that for every field $v\in L^p$, the $W^{-1,p}$ norm of $\pdeor Pv$ is controlled by the $W^{-1,p}$ norm of $v$ itself (we refer to \cite[Subsection 2.1]{santos} for a detailed explanation of this issue and to the references therein for a treatment of the main properties of pseudo-differential operators). For the same reason, in the proof of the inequality
 $$\frac{d\mathcal{I}((u,v)\cdot)(x)}{d\mathcal{L}^N}\leq \qa f(x,u(x),v(x))\quad\text{for a.e. }x\in\Omega,$$
 an equi-integrability argument is needed (see Proposition \ref{prop:same-limit}). We also point out that the representation formula in Theorem \ref{thm:main} was obtained in a simplified setting in \cite{davoli.fonseca2} as a corollary of the main homogenization result. Here we provide an alternative, direct proof, which does not rely on homogenization techniques.\\

The paper is organized as follows: in Section \ref{section:prel} we establish the main assumptions on the differential operator $\pdeor$ and we recall some preliminary results on $\pdeor-$quasiconvexity with variable coefficients. Section \ref{section:envelope} is devoted to the proof of Theorem \ref{thm:main}.\\

\noindent\textbf{Notation}\\
Throughout the paper $\Omega\subset \R^N$ is a bounded open set, $1<p,q<+\infty$, $\mathcal{O}(\Omega)$ is the set of open subsets of $\Omega$,  $Q$ denotes the unit cube in $\R^N$, $Q(x_0,r)$ and $B(x_0,r)$ are, respectively, the open cube and the open ball in $\Rn$, with center $x_0$ and radius $r$. Given an exponent $1<q<+\infty$, we denote by $q'$ its conjugate exponent, i.e., $q'\in (1,+\infty)$ is such that
$$\frac{1}{q}+\frac{1}{q'}=1.$$
Whenever a map $v\in L^q, C^{\infty},\cdots$ is $Q-$periodic, that is
$$v(x+e_i)=v(x)\quad i=1,\cdots, N,$$
for a.e. $x\in \R^N$, $\{e_1,\cdots,e_N\}$ being the standard basis of $\R^N$, we write $v\in L^q_{\rm per}, C^{\infty}_{\rm per},\dots$ We implicitly identify the spaces $L^q(Q)$ and $L^q_{\rm per}(\R^N)$.

We adopt the convention that $C$ will denote a generic constant, whose value may change from line to line in the same formula.

\section{Preliminary results}
\label{section:prel}
In this section we introduce the main assumptions on the differential operator $\pdeor$ and we recall some preliminary results about $\pdeor-$quasiconvexity.

For $i=1,\cdots, N$, $x\in \R^N$, consider the linear operators $A^i(x)\in \M^{l\times d}$, with $A^i\in C^{\infty}(\R^N;\M^{l\times d})\cap W^{1,\infty}(\R^N;\M^{l\times d})$. For every $v\in L^q(\Omega;\R^d)$ we set
$$\pdeor v:= \sum_{i=1}^N A^i(x)\frac{\partial v(x)}{\partial x_i}\in W^{-1,q}(\Omega;\R^l).$$

The symbol $\A:\R^N\times \R^N\setminus \{0\}\to \M^{l\times d}$ associated to the differential operator $\pdeor$ is
$$\A(x,\lambda):=\sum_{i=1}^N A^i(x)\lambda_i\in \M^{l\times d}$$
for every $x\in \R^N$, $\lambda\in \R^N\setminus \{0\}$.
We assume that $\pdeor$ satisfies the following \emph{uniform constant rank condition}:
\be{eq:constant-rank-condition}
\text{rank }\Big(\sum_{i=1}^N A^i(x)\lambda_i\Big)=r\quad\text{for all }x\in\R^N\text{ and }\lambda\in \Rn \setminus \{0\}.
\ee
For every $x\in\Rn$, $\lambda\in \Rn\setminus\{0\}$, let $\PP(x,\lambda):\rd\to\rd$ be the linear projection on Ker $\A(x,\lambda)$, and let $\QQ(x,\lambda):\rl\to\rd$ be the linear operator given by
\begin{eqnarray*}
&&\QQ(x,\lambda)\A(x,\lambda)v:=v-\PP(x,\lambda)v\quad\text{for all }v\in\rd,\\
&&\QQ(x,\lambda)\xi=0\quad\text{if }\xi\notin \text{Range }\A(x,\lambda).
\end{eqnarray*}
The main properties of $\PP(\cdot,\cdot)$ and $\QQ(\cdot,\cdot)$ are recalled in the following proposition (see \emph{e.g.} \cite[Subsection 2.1]{santos}).
\begin{proposition}
\label{prop:properties-P-Q}
Under the constant rank condition \eqref{eq:constant-rank-condition}, for every $x\in\Rn$ the operators $\PP(x,\cdot)$ and $\QQ(x,\cdot)$ are, respectively, $0$-homogeneous and $(-1)$-homogeneous. In addition, $\PP\in$ $C^{\infty}(\Rn\times \Rn\setminus\{0\};\M^{d\times d})$ and $\QQ\in C^{\infty}(\Rn\times\Rn\setminus\{0\};\M^{d\times l})$.
\end{proposition}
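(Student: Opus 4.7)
The plan is to derive the homogeneity statements directly from the definitions of $\PP$ and $\QQ$ and the $1$-homogeneity of $\A(x,\cdot)$, and to establish smoothness by expressing both operators as contour integrals (Dunford--Riesz spectral projections and holomorphic functional calculus), exploiting the constant rank condition \eqref{eq:constant-rank-condition} to secure a locally uniform spectral gap.

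For homogeneity, I fix $x\in\Rn$ and $t\in\R\setminus\{0\}$. From $\A(x,t\lambda)=t\A(x,\lambda)$ one sees $\mathrm{Ker}\,\A(x,t\lambda)=\mathrm{Ker}\,\A(x,\lambda)$ and $\mathrm{Range}\,\A(x,t\lambda)=\mathrm{Range}\,\A(x,\lambda)$; as $\PP(x,\lambda)$ is the orthogonal projection onto $\mathrm{Ker}\,\A(x,\lambda)$, this yields $\PP(x,t\lambda)=\PP(x,\lambda)$. Inserting $t\lambda$ in the defining identity for $\QQ$ gives
$$\QQ(x,t\lambda)\bigl(t\A(x,\lambda)\bigr)v \;=\; v-\PP(x,t\lambda)v \;=\; v-\PP(x,\lambda)v \;=\; \QQ(x,\lambda)\A(x,\lambda)v$$
for all $v\in\rd$, which forces $\QQ(x,t\lambda)=t^{-1}\QQ(x,\lambda)$ on $\mathrm{Range}\,\A(x,\lambda)$; since both operators vanish on the complement of this range, the identity extends to all of $\rl$.

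For smoothness, fix $(x_0,\lambda_0)\in\Rn\times(\Rn\setminus\{0\})$ and introduce the symmetric matrices $B(x,\lambda):=\A(x,\lambda)^*\A(x,\lambda)\in\M^{d\times d}$ and $\tilde B(x,\lambda):=\A(x,\lambda)\A(x,\lambda)^*\in\M^{l\times l}$. These depend smoothly on $(x,\lambda)$ since $A^i\in C^\infty(\Rn;\M^{l\times d})$ and $\A$ is polynomial in $\lambda$. By \eqref{eq:constant-rank-condition}, $B$ and $\tilde B$ both have rank $r$ at every point of $\Rn\times(\Rn\setminus\{0\})$, so each carries exactly $r$ strictly positive eigenvalues counted with multiplicity. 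By continuity of the spectrum, there exist an open neighbourhood $U$ of $(x_0,\lambda_0)$ and constants $0<\alpha<\beta$ such that the nonzero spectra of $B(x,\lambda)$ and $\tilde B(x,\lambda)$ lie in $[\alpha,\beta]$ for all $(x,\lambda)\in U$. Choose positively oriented Jordan curves $\gamma,\gamma'\subset\mathbb{C}\setminus([\alpha,\beta]\cup\{0\})$ with $\gamma$ enclosing only $\{0\}$ and $\gamma'$ enclosing only $[\alpha,\beta]$. Then on $U$ the spectral projection formula delivers
$$\PP(x,\lambda)\;=\;\frac{1}{2\pi i}\oint_{\gamma}\bigl(zI-B(x,\lambda)\bigr)^{-1}dz,$$
while the holomorphic functional calculus applied to $f(z)=1/z$ on the spectrum enclosed by $\gamma'$ returns the Moore--Penrose pseudoinverse of $\tilde B$, so that
$$\QQ(x,\lambda)\;=\;\A(x,\lambda)^*\cdot\frac{1}{2\pi i}\oint_{\gamma'}\frac{1}{z}\bigl(zI-\tilde B(x,\lambda)\bigr)^{-1}dz;$$
the second formula is verified against the two defining properties of $\QQ$ using the singular value decomposition of $\A(x,\lambda)$. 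Both integrands are jointly smooth in $(x,\lambda,z)\in U\times\gamma$ (respectively $U\times\gamma'$) and the contours are compact, whence $\PP,\QQ\in C^\infty(U)$. A covering argument over $\Rn\times(\Rn\setminus\{0\})$ then yields the claimed global smoothness.

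The main technical point is the spectral-gap step: without the uniform constant rank condition \eqref{eq:constant-rank-condition} the positive eigenvalues of $B$ could collapse to zero and no valid contour separating $\{0\}$ from the rest of the spectrum could be chosen on a neighbourhood of $(x_0,\lambda_0)$. Once the gap is secured, the verification that the contour-integral formula for $\QQ$ satisfies the two defining relations and the remaining linear-algebra bookkeeping (orthogonal projections, ranges, pseudoinverses) is routine.
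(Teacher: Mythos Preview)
Your argument is correct. The paper itself does not supply a proof of this proposition; it merely records the statement and refers the reader to \cite[Subsection 2.1]{santos} (and, implicitly, to the constant-coefficient precursor in \cite{fonseca.muller}). What you have written is therefore not a different route so much as an explicit, self-contained justification of a result the paper is content to quote.

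A few remarks on your write-up. The homogeneity computations are clean and exactly what is needed. For the smoothness part, your use of the Riesz spectral projection for $B=\A^*\A$ to recover $\PP$ and of the holomorphic functional calculus on $\tilde B=\A\A^*$ to build the Moore--Penrose inverse and hence $\QQ=\A^*\tilde B^{+}$ is the standard and correct strategy; it is essentially the same mechanism that underlies the classical constant-coefficient case, with the $x$-variable carried along as an additional smooth parameter. You are right to emphasise that the constant rank hypothesis \eqref{eq:constant-rank-condition} is precisely what guarantees the locally uniform spectral gap separating $\{0\}$ from the nonzero eigenvalues, without which the contours $\gamma,\gamma'$ could not be fixed on a neighbourhood and differentiation under the integral sign would fail. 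One minor point of interpretation: the paper's phrase ``$\QQ(x,\lambda)\xi=0$ if $\xi\notin\mathrm{Range}\,\A(x,\lambda)$'' should be read, as you do, as vanishing on the orthogonal complement of the range (so that $\QQ$ is indeed the Moore--Penrose pseudoinverse of $\A$); this is the only reading compatible with $\QQ$ being linear and matches the formula you derive.
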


Let $\eta\in C^{\infty}_c(\Omega;[0,1])$, $\eta=1$ in ${\Omega'}$ for some ${\Omega'}\subset\subset \Omega$. We denote by $\A_{\eta}$ the symbol
\be{eq:def-A-eta}
\A_{\eta}(x,\lambda):=\sum_{i=1}^N \eta(x)A^i(x)\lambda_i,
\ee
for every $x\in\Rn$, $\lambda\in\Rn\setminus\{0\}$, and by $\pdeor_{\eta}$ the corresponding pseudo-differential operator (see \cite[Subsection 2.1]{santos} for an overview of the main properties of pseudo-differential operators). Let $\chi\in C^{\infty}(\R^+;\R)$ be such that $\chi(|\lambda|)=0$ for $|\lambda|<1$ and $\chi(|\lambda|)=1$ for $|\lambda|>2$. Let also $P_{\eta}$ be the operator associated to the symbol
\be{eq:def-P-eta}
\PP_{\eta}(x,\lambda):=\eta^2(x)\PP(x,\lambda)\chi(|\lambda|)
\ee
for every $x\in\Rn$, $\lambda\in\Rn\setminus\{0\}$. The following proposition (see \cite[Theorem 2.2 and Subsection 2.1]{santos}) collects the main properties of the operators $P_{\eta}$ and $\pdeor_{\eta}$.
\begin{proposition}
\label{prop:santos-properties}
Let $1<q<+\infty$, and let $\pdeor_{\eta}$ and $P_{\eta}$ be the pseudo-differential operators associated with the symbols \eqref{eq:def-A-eta} and \eqref{eq:def-P-eta}, respectively. Then there exists a constant $C$ such that
\begin{equation}
\label{eq:santos-prop01}
\|P_{\eta}v\|_{L^q(\Omega;\R^d)}\leq C\|v\|_{L^q(\Omega;\rd)}
\end{equation}
for every $v\in L^q(\Omega;\rd)$, and
\begin{align*}
&\|P_{\eta}v\|_{W^{-1,q}(\Omega;\R^d)}\leq C\|v\|_{W^{-1,q}(\Omega;\rd)},\\
&\|v-P_{\eta}v\|_{L^q(\Omega;\R^d)}\leq C\big(\|\pdeor_{\eta}v\|_{W^{-1,q}(\Omega;\rl)}+\|v\|_{W^{-1,q}(\Omega;\rd)}\big),\\
&\|\pdeor_{\eta}P_{\eta}v\|_{W^{-1,q}(\Omega;\rl)}\leq C\|v\|_{W^{-1,q}(\Omega;\rd)}
\end{align*}
for every $v\in W^{-1,q}(\Omega;\R^d)$.
\end{proposition}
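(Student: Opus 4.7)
The statement is explicitly attributed to \cite[Theorem 2.2 and Subsection 2.1]{santos}, so the honest proof in the present paper is by citation. I nonetheless sketch the underlying mechanism, since this both motivates the statement and exhibits why each of the four bounds can be expected to hold.

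The first step is to place the symbols in the standard Hörmander classes. By Proposition \ref{prop:properties-P-Q}, under the uniform constant rank assumption \eqref{eq:constant-rank-condition} the map $\PP(x,\cdot)$ is $0$-homogeneous and $\QQ(x,\cdot)$ is $(-1)$-homogeneous, both $C^{\infty}$ away from $\lambda=0$. Multiplying by the cutoff $\chi(|\lambda|)$ (which kills the singularity at $\lambda=0$) and by the compactly supported smooth factor $\eta^2(x)$ one checks that $\PP_\eta\in S^{0}_{1,0}(\Rn\times\Rn)$, while $\A_\eta\in S^{1}_{1,0}(\Rn\times\Rn)$. Consequently $P_\eta$ is a properly supported pseudo-differential operator of order $0$ and $\pdeor_\eta$ is of order $1$. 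From the Calderón--Vaillancourt / Mihlin--Hörmander theorem for pseudo-differential operators, every operator with symbol in $S^{0}_{1,0}$ is bounded on $L^q(\Omega;\rd)$ for every $1<q<+\infty$. This immediately yields \eqref{eq:santos-prop01}. By duality and the Bessel-potential characterization of $W^{-1,q}$, the same argument (applied to the formal adjoint) gives $\|P_{\eta}v\|_{W^{-1,q}}\leq C\|v\|_{W^{-1,q}}$.

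The crucial third estimate rests on the algebraic identity $\A(x,\lambda)\PP(x,\lambda)=0$ (valid by the very definition of $\PP$ as projection onto $\mathrm{Ker}\,\A(x,\lambda)$) together with $\PP=I-\QQ\A$. Writing the symbol of $P_\eta$ accordingly and applying the composition calculus for symbols one obtains a decomposition
$$
P_\eta v \;=\; \eta^{2}(x)\,\chi(D)v \;-\; Q_\eta\pdeor_\eta v \;+\; T v,
$$
where $Q_\eta$ is the order-$(-1)$ pseudo-differential operator with symbol $\eta^{2}(x)\chi(|\lambda|)\QQ(x,\lambda)$ and $T$ collects the lower-order remainders produced by the asymptotic symbol expansion. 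The term $(I-\eta^{2}\chi(D))v$ is a sum of a smoothing contribution (from $1-\chi$) and a multiplication by $1-\eta^{2}$, both of which are controlled by $\|v\|_{W^{-1,q}}$; the contribution $Q_\eta\pdeor_\eta v$ is controlled by $\|\pdeor_\eta v\|_{W^{-1,q}}$ because $Q_\eta$ is of order $-1$ and so maps $W^{-1,q}\to L^q$ continuously; the remainder $T$ has order $\le -1$ and is absorbed. Adding up gives the third bound. For the fourth, the principal symbol of $\pdeor_\eta\circ P_\eta$ equals $\eta^{3}(x)\chi(|\lambda|)\A(x,\lambda)\PP(x,\lambda)\equiv 0$, so the composition is actually of order $\leq 0$, hence continuous on $W^{-1,q}$, which gives $\|\pdeor_\eta P_\eta v\|_{W^{-1,q}}\leq C\|v\|_{W^{-1,q}}$.

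The technical heart of the argument, and the step I would expect to be the main obstacle, is the careful bookkeeping of the lower-order remainders in the asymptotic expansion of the symbol of a composition of pseudo-differential operators: one must verify that each error term carries the correct order so as to map $W^{-1,q}\to L^q$ (for the third estimate) or $W^{-1,q}\to W^{-1,q}$ (for the fourth) continuously, uniformly in the data $\eta$. This accounting, which makes essential use of the constant rank hypothesis \eqref{eq:constant-rank-condition} to guarantee smoothness of $\PP$ and $\QQ$ in the cotangent variable, is exactly what is carried out in \cite{santos}, and for the present paper the result is simply quoted.
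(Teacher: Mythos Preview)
Your identification is correct: the paper does not prove Proposition \ref{prop:santos-properties} at all but simply quotes \cite[Theorem 2.2 and Subsection 2.1]{santos}, exactly as you say in your first sentence. In that sense your ``proof'' matches the paper's.

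Your supplementary sketch of the pseudo-differential mechanism is broadly faithful to what happens in \cite{santos}, but one step is not quite right as written. In the decomposition for the third estimate you assert that the contribution $(1-\eta^{2}(x))v$ is controlled in $L^q$ by $\|v\|_{W^{-1,q}}$. Multiplication by a smooth bounded function does not improve regularity, so for a generic $v\in W^{-1,q}$ this fails: the term $(1-\eta^{2})v$ stays in $W^{-1,q}$ and need not lie in $L^q$ at all. In the actual applications in the present paper (see \eqref{eq:rev-star4}, where $v=z^r_m$ is supported in $Q(x_0,r)\subset\{\eta=1\}$ by \eqref{eq:inclusions}) this term vanishes identically, which is how the issue is sidestepped in practice; in \cite{santos} the bookkeeping is set up so that the analogous contribution is handled via the localisation built into the construction. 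The rest of your outline --- the symbol classes, the identity $\PP=I-\QQ\A$, the order count showing $\pdeor_\eta P_\eta$ has vanishing principal symbol and hence is of order $\leq 0$ --- is correct and captures the essential ideas.
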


\section{Proof of Theorem \ref{thm:main}}
\label{section:envelope}
Before proving Theorem \ref{thm:main} we state and prove a decomposition lemma, which generalizes  \cite[Lemma 2.15]{fonseca.muller} to the case of operators with variable coefficients.
\begin{lemma}
\label{lemma:equiint}
Let $1<q<+\infty$. Let $\pdeor$ be a first order differential operator with variable coefficients, satisfying \eqref{eq:constant-rank-condition}. Let $v\in L^q(\Omega;\rd)$, and let $\{v_n\}$ be a bounded sequence in $L^q(\Omega;\rd)$ such that
\begin{align*}
&v_n\wk v\quad\text{weakly in }L^q(\Omega;\rd),\\
&\pdeor v_n\to 0\quad\text{strongly in }W^{-1,q}(\Omega;\rl),\\
&\{v_n\}\text{ generates the Young measure }\nu.
\end{align*}
Then, there exists a $q$-equiintegrable sequence $\{\tilde{v}_n\}\subset L^q(\Omega;\rd)$ such that 
\begin{eqnarray}
&&\label{eq:1vn} \pdeor \tilde{v}_n\to 0\quad\text{strongly in }W^{-1,s}(\Omega;\rl)\quad\text{for every }1<s<q,\\
&&\nonumber\iO \tilde{v}_n(x)\,dx=\iO v(x)\,dx,\\
&&\label{eq:3vn} \tilde{v}_n-v_n\to 0\quad\text{strongly in }L^s(\Omega;\rd)\quad\text{for every }1<s<q,\\
&&\label{eq:4vn} \tilde{v}_n\wk v\quad\text{weakly in }L^q(\Omega;\rd).
\end{eqnarray}
In addition, if $\Omega\subset Q$ then we can construct the sequence $\{\tilde{v}^n\}$ so that $\tilde{v}_n-v\in L^q_{\text{per}}(\Rn;\rd)$ for every $n\in \N$.
\end{lemma}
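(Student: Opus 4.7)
My plan is to reduce the lemma to a standard unconstrained decomposition (truncation) lemma for $L^q$-bounded sequences --- such as the ones by Acerbi--Fusco, Kristensen, or the $\pdeor$-free version used in \cite{fonseca.muller} --- and then to observe that the variable-coefficient differential constraint is preserved under truncation \emph{in the weaker topology} $W^{-1,s}$ with $s<q$. Since the conclusion \eqref{eq:1vn} only demands asymptotic vanishing of $\pdeor\tilde v_n$ in $W^{-1,s}$ and not in $W^{-1,q}$, there is no need to correct the truncated sequence by projecting it onto the approximate kernel of $\pdeor$ via the pseudo-differential operator $P_\eta$ of Proposition \ref{prop:santos-properties}. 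This is precisely what makes a direct argument possible in the variable-coefficient setting.

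Concretely, I would apply an unconstrained decomposition lemma to the $L^q$-bounded sequence $\{v_n-v\}$ to obtain a sequence $\{w_n\}\subset L^q(\Omega;\rd)$ which is $q$-equiintegrable, satisfies $w_n\wk 0$ weakly in $L^q$, coincides with $v_n-v$ outside a set $E_n$ with $|E_n|\to 0$, and has $\iO w_n\,dx=0$ (the last obtained by subtracting the infinitesimal mean of $w_n$, which does not affect the previous properties). Setting $\tilde v_n:=w_n+v$, conclusions \eqref{eq:3vn}, \eqref{eq:4vn} and the mean-preservation are immediate: $\tilde v_n-v_n=w_n-(v_n-v)$ is supported on $E_n$ and $L^q$-equibounded, so Vitali's theorem yields convergence to $0$ in $L^s$ for every $1<s<q$; the weak convergence in $L^q$ follows from $w_n\wk 0$ plus boundedness; and the mean condition is built in.

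For \eqref{eq:1vn}, I would first note that $\pdeor v=0$, as $v$ is the weak $L^q$-limit of a sequence with $\pdeor v_n\to 0$ in $W^{-1,q}$, so
\[
\pdeor\tilde v_n=\pdeor w_n=\pdeor v_n+\pdeor\bigl(w_n-(v_n-v)\bigr).
\]
The first term converges to $0$ in $W^{-1,q}$ by hypothesis, hence in $W^{-1,s}$ for every $s<q$ via the inclusion $W^{-1,q}(\Omega)\hookrightarrow W^{-1,s}(\Omega)$ on the bounded set $\Omega$. The second term converges to $0$ in $W^{-1,s}$ because $w_n-(v_n-v)\to 0$ in $L^s$ and the operator $\pdeor\colon L^s(\Omega;\rd)\to W^{-1,s}(\Omega;\rl)$ is bounded, thanks to the $W^{1,\infty}$-regularity of the coefficients $A^i$. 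This is the only point at which the variable-coefficient nature of $\pdeor$ intervenes, and it does so routinely.

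For the final assertion when $\Omega\subset Q$, I would extend $w_n$ by zero from $\Omega$ to $Q$ and then $Q$-periodically to all of $\R^N$; this extension preserves equiintegrability and the zero mean, while on $\Omega$ the sequence $\tilde v_n$ is unchanged, so the previously established properties persist and $\tilde v_n-v=w_n\in L^q_{\mathrm{per}}(\Rn;\rd)$. The main technical input is the existence of the unconstrained $q$-equiintegrable truncation $\{w_n\}$ with the stated coincidence and measure properties; this classical result is the only delicate ingredient, and every remaining step is a short linear computation.
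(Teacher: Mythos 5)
Your core argument is, in substance, the route the paper takes: the paper delegates exactly your main step to the opening of the proof of \cite[Theorem 1.1]{santos}, namely produce a $q$-equiintegrable modification $\{w_n\}$ of $\{v_n-v\}$ agreeing with it off sets of vanishing measure, and then observe that, since the modification is only small in $L^s$ for $s<q$ and $\pdeor$ maps $L^s(\Omega;\rd)$ boundedly into $W^{-1,s}(\Omega;\rl)$ (this is where $A^i\in W^{1,\infty}$ enters, exactly as you say), the constraint survives in $W^{-1,s}$ --- which is also why \eqref{eq:1vn} is stated only for $s<q$ and no projection $P_\eta$ is needed. The one genuine soft spot is your ``main technical input'': the unconstrained decomposition lemma in the form you cite (Acerbi--Fusco, Kristensen, \cite{fonseca.muller}) is obtained through a biting-lemma extraction and therefore holds \emph{for a subsequence}, whereas \eqref{eq:3vn} pairs $\tilde v_n$ with $v_n$ at the same index, so you need the full-sequence version. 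That is precisely what the hypothesis that $\{v_n\}$ generates the Young measure $\nu$ --- which your proof never invokes --- is for: one truncates at levels $k$, identifies the weak limits and the limits of the $q$-th moments of the truncations along the \emph{whole} sequence via $\nu$, and deduces $q$-equiintegrability of a diagonal truncation from the convergence of its $q$-th moments to $\int_\Omega\langle\nu_x,|\cdot|^q\rangle\,dx$. Without this (or an equivalent device) the indexing in \eqref{eq:3vn} is not justified. Everything else in your verification of \eqref{eq:1vn}--\eqref{eq:4vn}, including the identity $\pdeor v=0$ and the mean normalization, is correct.

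On the periodic addendum you genuinely diverge from the paper. Your extension by zero does satisfy the statement as written, because \eqref{eq:1vn} is only required in $W^{-1,s}(\Omega;\rl)$ and membership in $L^q_{\rm per}(\Rn;\rd)$ imposes no compatibility across $\partial\Omega$; so this is not a gap for the lemma itself. It does, however, yield a strictly weaker object: the jump of $\tilde v_n-v$ across $\partial\Omega$ makes $\pdeor(\tilde v_n-v)$ acquire a surface term on $\partial\Omega$ that does not vanish when the periodized field is viewed as a distribution on all of $Q$. The paper instead multiplies by cut-offs $\varphi^i$ vanishing on $Q\setminus\Omega$ and diagonalizes \`a la Attouch, paying the commutator $\big(\sum_j A^j\partial_j\varphi^i\big)(\hat v_n-v)$, which tends to zero in $W^{-1,s}$ by the compact embedding of $L^q$ into $W^{-1,q}$; this keeps the differential constraint asymptotically satisfied up to and across the boundary, which is the form in which the periodized fields are exploited later in the paper. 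If you only need the lemma verbatim, your shortcut is fine; if the periodicity is meant to feed into the $\pdeor$-quasiconvexity formula on $Q$, you should adopt the cut-off construction.
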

\begin{proof}
Arguing as in the first part of \cite[Proof of Theorem 1.1]{santos}, we construct a $q$-equiintegrable sequence $\{\hat{v}_n\}$ satisfying \eqref{eq:1vn}, \eqref{eq:3vn} and \eqref{eq:4vn}.
The conclusion follows by setting $\tilde{v}_n:=\hat{v}_n-\iO\hat{v}_n(x)\,dx+\iO v(x)\,dx$.

In the case in which $\Omega\subset Q$, let $\{\varphi^i\}$ be a sequence of cut-off functions in $Q$ with $0\leq \varphi^i\leq 1$ in $Q$, such that $\varphi^i=0$ on $Q\setminus \Omega$ and $\varphi^i\to 1$ pointwise in $\Omega$. Define $w^i_n:=\varphi^i(\hat{v}_n-v)$. By \eqref{eq:4vn} for every $\psi\in L^{q'}(\Omega;\rd)$ we have
$$\lim_{i\to +\infty}\lim_{n\to +\infty}\iO w^i_n(x)\psi(x)\,dx=0.$$
By \eqref{eq:1vn}, \eqref{eq:3vn}, and the compact embedding of $L^q(\Omega;\R^d)$ into $W^{-1,q}(\Omega;\R^d)$, there holds
$$\pdeor w^i_n=\varphi^i \pdeor \hat{v}_n+\Bigg(\sum_{j=1}^N A^j\frac{\partial \varphi^i}{\partial x_j}\Bigg)\hat{v}_n\to 0\quad\text{strongly in }W^{-1,s}(\Omega;\rl)$$
as $n\to +\infty$, for every $1<s<q$. Extending the maps $w^i_n$ outside $Q$ by periodicity, by the metrizability of the weak topology on bounded sets and by Attouch's diagonalization lemma (see \cite[Lemma 1.15 and Corollary 1.16]{attouch}), we obtain a sequence 
$$w_n:=w^{i(n)}_n,$$
 with $\{w_n\}\subset L^q_{\text{per}}(\Rn;\rd)$, and such that $w_n+v$ satisfies
\eqref{eq:1vn}, \eqref{eq:3vn} and \eqref{eq:4vn}.
The thesis follows by setting 
$$\tilde{v}_n:=w_n-\iO w_n(x)\,dx+v.$$
\end{proof}
The following proposition will allow us to neglect vanishing perturbations of $q$-equiintegrable sequences.
\begin{proposition}
\label{prop:same-limit}
For every $n\in \N$, let $f_n:Q\times \R^d\to [0,+\infty)$ be a continuous function. Assume that there exists a constant $C>0$ such that, for $q>1$,
\be{eq:unif-cont-fn}
\sup_{n\in \N} f_n(y,\xi)\leq C(1+|\xi|^q)\quad\text{for every }y\in Q\text{ and }\xi\in \R^d,
\ee
and that the sequence $\{f_n(y,\cdot)\}$ is equicontinuous in $\R^d$, uniformly in $y$.
Let $\{w_n\}$ be a $q$-equiintegrable sequence in $L^q(Q;\R^d)$, and let $\{v_n\}\subset L^q(Q;\R^d)$ be such that
\be{eq:vn-to-zero}
v_n\to 0\quad\text{strongly in }L^q(Q;\R^d).
\ee
Then
$$\lim_{n\to +\infty}\Big|\int_Q f_n\big(y,w_n(y)\big)\,dy-\int_Q f_n\big(y,v_n(y)+w_n(y)\big)\,dy\Big|=0.$$
\end{proposition}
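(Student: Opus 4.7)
The plan is a standard truncation-plus-equicontinuity argument: split $Q$ into a ``good'' set where equicontinuity controls the integrand pointwise, and ``bad'' sets of small measure where the $q$-growth bound (\ref{eq:unif-cont-fn}) together with $q$-equiintegrability of $\{w_n\}$ and $L^q$-smallness of $v_n$ absorb everything. Fix $\varepsilon>0$ and, given a truncation level $M>0$, use the equicontinuity of $\{f_n(y,\cdot)\}$ uniformly in $(n,y)$ (applied on the compact ball $\{|\xi|\leq M+1\}$ via a finite covering argument) to select $\delta=\delta(M,\varepsilon)\in(0,1]$ such that
$$|f_n(y,\xi)-f_n(y,\eta)|\leq \varepsilon\qquad \text{whenever } |\xi|,|\eta|\leq M+1 \text{ and } |\xi-\eta|\leq\delta,$$
uniformly in $n\in\N$ and $y\in Q$.

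Introduce the decomposition $Q = G_n^M \cup A_n \cup B_n^M$ with
$$A_n := \{y\in Q:|v_n(y)|>\delta\},\quad B_n^M:=\{y\in Q:|w_n(y)|>M\},\quad G_n^M:=Q\setminus(A_n\cup B_n^M).$$
On $G_n^M$ we have $|w_n|\leq M$ and $|v_n|\leq\delta\leq 1$, so $|v_n+w_n|\leq M+1$ and the equicontinuity estimate yields $|f_n(y,v_n+w_n)-f_n(y,w_n)|\leq\varepsilon$, so the $G_n^M$-contribution to the difference is at most $\varepsilon|Q|$. On $A_n\cup B_n^M$, the growth bound gives $|f_n(y,v_n+w_n)-f_n(y,w_n)|\leq C(1+|w_n|^q+|v_n|^q)$. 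By Chebyshev, $|A_n|\leq \delta^{-q}\|v_n\|_{L^q}^q\to 0$ as $n\to\infty$, hence $\int_{A_n}(1+|v_n|^q)\,dy\to 0$ using \eqref{eq:vn-to-zero}, and $\int_{A_n}|w_n|^q\,dy\to 0$ by the small-measure characterization of $q$-equiintegrability. For the $B_n^M$ piece, $q$-equiintegrability directly gives $\int_{B_n^M}|w_n|^q\,dy\leq \int_{\{|w_n|>M\}}|w_n|^q\,dy\to 0$ as $M\to\infty$ uniformly in $n$, while $|B_n^M|\leq M^{-q}\sup_n\|w_n\|_{L^q}^q\to 0$ as $M\to\infty$ and $\int_{B_n^M}|v_n|^q\,dy\leq\|v_n\|_{L^q}^q\to 0$.

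Combining, for each fixed $M$ one obtains $\limsup_{n\to\infty}|I_n|\leq\varepsilon|Q|+C\,\omega(M)$, where $I_n$ denotes the difference of the two integrals and $\omega(M)\to 0$ as $M\to\infty$. Letting first $M\to\infty$ and then $\varepsilon\to 0$ yields the claim.

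The argument is elementary; the only subtlety is organizational, namely being careful about the order of limits: the modulus $\delta$ depends on both $M$ and $\varepsilon$, so one must fix $(M,\varepsilon)$, send $n\to\infty$, then $M\to\infty$, then $\varepsilon\to 0$. A minor additional point is that \emph{equicontinuity in $\R^d$ uniformly in $y$} must be upgraded to \emph{uniform} equicontinuity on compact subsets of $\R^d$; this is the reason for the truncation $|w_n|\leq M$, and the upgrade follows from a straightforward finite-covering argument on the ball of radius $M+1$.
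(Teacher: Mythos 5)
Your proof is correct and follows essentially the same truncation-plus-equicontinuity strategy as the paper; the only difference is that you control the set where $|v_n|>\delta$ directly via Chebyshev, whereas the paper invokes Egoroff's theorem to produce a small exceptional set off which $v_n\to 0$ uniformly. Your variant is, if anything, slightly cleaner, and working on the ball of radius $M+1$ tidies up a point the paper glosses over (it states equicontinuity for $\xi_1,\xi_2\in\overline{B(0,M_0)}$ but applies it to $v_n+w_n$, whose modulus may reach $M_0+\delta$).
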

\begin{proof}
Fix $\eta>0$. In view of \eqref{eq:vn-to-zero}, the sequence $\{C(1+|v_n|^q+|w_n|^q)\}$ is equiintegrable in $Q$, thus there exists $0<\ep<\frac{\eta}{3}$ such that
\be{eq:prop1-new}
\sup_{n\in \N} \int_{A} C\big(1+|v_n(y)|^q+|w_n(y)|^q\big)\,dy<\frac{\eta}{3}
\ee
for every $A\subset Q$ with $|A|<\ep$.
By the $q$-equiintegrability of $\{w_n\}$ and $\{v_n\}$, and by Chebyshev's inequality there holds
$$\big|Q\cap \big(\{|w_n|>M\}\cup \{|v_n|>M\}\big)\big|\leq \frac{1}{M^q}\int_{Q}(|w_n(y)|^q+|v_n(y)|^q)\,dy\leq \frac{C}{M^q}$$
for every $n\in \N$. Therefore, there exists $M_0$ satisfying
\be{eq:prop2-new}
\sup_{n\in \N}\big|Q\cap \big(\{|w_n|>M_0\}\cup \{|v_n|>M_0\}\big)\big|\leq \frac{\ep}{2}.
\ee
By the uniform equicontinuity of the sequence $\{f_n(y,\cdot)\}$, there exists $\delta>0$ such that, for every $\xi_1,\xi_2\in \overline{B(0,M_0)}$, with $|\xi_1-\xi_2|<\delta$, we have
\be{eq:prop3-new}
\sup_{y\in Q}|f_n(y,\xi_1)-f_n(y,\xi_2)|<\ep
\ee
for every $n\in \N$.
By \eqref{eq:vn-to-zero} and Egoroff's theorem, there exists a set $E_{\ep}\subset Q$, $|E_{\ep}|<\frac{\ep}{2}$, such that
$$v_n\to 0\quad\text{uniformly in }Q\setminus E_{\ep},$$
and, in particular, 
\be{eq:prop5-new}
|v_n(x)|<\delta\quad\text{for a.e. }x\in Q\setminus E_{\ep},
\ee
for every $n\geq n_0$, for some $n_0\in \N$. 

We observe that
\begin{align}
\label{eq:prop2bis-new}\int_Q f_n(y, v_n(y)+w_n(y))\,dy&=\int_{Q\cap \{|w_n|\leq M_0\}\cap  \{|v_n|\leq M_0\}}f_n(y, v_n(y)+w_n(y))\,dy\\
&\nonumber\quad+\int_{Q\cap (\{|w_n|> M_0\}\cup  \{|v_n|> M_0\})}f_n(y, v_n(y)+w_n(y))\,dy.
\end{align}
The first term in the right-hand side of \eqref{eq:prop2bis-new} can be further decomposed as
\begin{align*}
&\int_{Q\cap \{|w_n|\leq M_0\}\cap  \{|v_n|\leq M_0\}}f_n(y, v_n(y)+w_n(y))\,dy\\
&\quad=\int_{(Q\setminus E_{\ep})\cap \{|w_n|\leq M_0\}\cap  \{|v_n|\leq M_0\}}f_n(y, v_n(y)+w_n(y))\,dy\\
&\qquad+\int_{E_{\ep}\cap \{|w_n|\leq M_0\}\cap  \{|v_n|\leq M_0\}}f_n(y, v_n(y)+w_n(y))\,dy\\
&\quad=\int_{(Q\setminus E_{\ep})\cap \{|w_n|\leq M_0\}\cap  \{|v_n|\leq M_0\}}f_n(y,w_n(y))\,dy\\
&\qquad+\int_{(Q\setminus E_{\ep})\cap \{|w_n|\leq M_0\}\cap  \{|v_n|\leq M_0\}}\big(f_n(y, v_n(y)+w_n(y))-f_n(y,w_n(y))\big)\,dy\\
&\qquad+\int_{E_{\ep}\cap \{|w_n|\leq M_0\}\cap  \{|v_n|\leq M_0\}}f_n(y, v_n(y)+w_n(y))\,dy\\
&\quad=\int_Q f_n(y,w_n(y))\,dy-\int_{E_{\ep}\cap \{|w_n|\leq M_0\}\cap  \{|v_n|\leq M_0\}}f_n(y,w_n(y))\,dy\\
&\qquad-\int_{Q\cap(\{|w_n|> M_0\}\cup  \{|v_n|> M_0\})}f_n(y,w_n(y))\,dy\\
&\qquad+\int_{(Q\setminus E_{\ep})\cap \{|w_n|\leq M_0\}\cap  \{|v_n|\leq M_0\}}\big(f_n(y, v_n(y)+w_n(y))-f_n(y,w_n(y))\big)\,dy\\
&\qquad+\int_{E_{\ep}\cap \{|w_n|\leq M_0\}\cap  \{|v_n|\leq M_0\}}f_n(y, v_n(y)+w_n(y))\,dy.
\end{align*}
We observe that by \eqref{eq:prop2-new} 
$$|E_{\ep}\cup (\{|w_n|> M_0\}\cup  \{|v_n|> M_0\})|<\ep.$$
Hence, for $n\geq n_0$, by \eqref{eq:unif-cont-fn}, \eqref{eq:prop1-new}, \eqref{eq:prop3-new}, and \eqref{eq:prop5-new} we deduce the estimate
\begin{align}
\label{eq:prop6-new}
&\Big|\int_Q f_n(y,w_n(y))\,dy-\int_Q f_n(y,v_n(y)+w_n(y))\,dy\Big|\\
&\quad\nonumber\leq \ep+\int_{{E_{\ep}\cup (\{|w_n|> M_0\}\cup  \{|v_n|> M_0\})}}2C(1+|w_n(y)|^p+|v_n(y)|^p)\,dy\leq \ep +\frac{2\eta}{3}.
\end{align}
The thesis follows by the arbitrariness of $\eta$.
\end{proof}
We now prove our main result.
\begin{proof}[Proof of Theorem \ref{thm:main}]
The proof is subdivided into 4 steps. Steps 1 and 2 follow along the lines of \cite[Proof of Theorem 1.1]{braides.fonseca.leoni}.  Step 3 is obtained by modifying \cite[Lemma 3.5]{braides.fonseca.leoni}, whereas Step 4 follows by adapting an argument in \cite[Proof of Theorem 1.2]{santos}. We only outline the main ideas of Steps 1 and 2 for convenience of the reader, whilst we provide more details for Steps 3 and 4.\\
\emph{Step 1}:\\
The first step consists in showing that
\begin{align*}
\mathcal{I}((u,v),D)=\inf&\Big\{\liminfn \int_D f(x,u(x),v_n(x))\,dx:\,\{v_n\}\text{ is }q-\text{equiintegrable },\\&\qquad\pdeor v_n\to 0\text{ strongly in }W^{-1,s}(D;\rl)\text{ for every }1<s<q\\
&\qquad\text{ and }v_n\wk v\text{ weakly in }L^q(D;\rd)\Big\}.
\end{align*}
This identification is proved by adapting \cite[Proof of Lemma 3.1]{braides.fonseca.leoni}. The only difference is the application of Lemma \ref{lemma:equiint} instead of \cite[Proposition 2.3 (i)]{braides.fonseca.leoni}.\\
\emph{Step 2}:\\
The second step is the proof that $\mathcal{I}((u,v),\cdot)$ is the trace of a Radon measure absolutely continuous with respect to $\mathcal{L}^N\lfloor\Omega$. This follows as a straightforward adaptation of \cite[Lemma 3.4]{braides.fonseca.leoni}. The only modifications are due to the fact that \cite[Proposition 2.3 (i)]{braides.fonseca.leoni} and \cite[Lemma 3.1]{braides.fonseca.leoni} are now replaced by Lemma \ref{lemma:equiint} and Step 1.\\
\emph{Step 3}:\\
We claim that
\be{eq:ineq1}\frac{d\mathcal{I}((u,v),\cdot)}{d\mathcal{L}^N}(x_0)\geq Q_{\pdeor (x_0)}f(x_0,u(x_0),v(x_0))\quad\text{for a.e. }x_0\in\Omega.\ee

Indeed, since $g(x,\xi):=f(x,u(x),\xi)$ is a Carath\'eodory function, by Scorza-Dragoni Theorem there exists a sequence of compact sets $K_j\subset \Omega$ such that 
$$|\Omega\setminus K_j|\leq \tfrac 1j$$
 and the restriction of $g$ to $K_j\times \rd$ is continuous. Hence, the set
\be{eq:defsetomega}
\omega:=\bigcup_{j=1}^{+\infty}(K_j\cap K_j^{*})\cap \mathcal{L}(u,v),
\ee
where $K_j^{*}$ is the set of Lebesgue point for the characteristic function of $K_j$ and $\mathcal{L}(u,v)$ is the set of Lebesgue points of $u$ and $v$, is such that
$$|\Omega\setminus \omega|\leq |\Omega\setminus K_j|\leq \frac{1}{j}\quad\text{for every }j,$$
and so $|\Omega\setminus \omega|=0$.
Let $x_0\in\omega$ be such that
\be{eq:omega-1}
\lim_{r\to 0^{+}}\frac{1}{r^N}\int_{Q(x_0,r)}|u(x)-u(x_0)|^p\,dx=\lim_{r\to 0^{+}}\frac{1}{r^N}\int_{Q(x_0,r)}|v(x)-v(x_0)|^q\,dx=0,\ee
and
\be{eq:omega-2}
\frac{d\mathcal{I}((u,v),\cdot)}{d\mathcal{L}^N}(x_0)=\lim_{r\to 0^{+}}\frac{\mathcal{I}((u,v),Q(x_0,r))}{r^N}<+\infty,
\ee
where the sequence of radii $r$ is such that $\mathcal{I}((u,v),\partial Q(x_0,r))=0$ for every $r$. (Such a choice of the sequence is possible due to Step 2). 

By Step 1, for every $r$ there exists a $q-$equiintegrable sequence $\{v_{n,r}\}$ such that 
\begin{align}
\nonumber&v_{n,r}\wk v\quad\text{weakly in }L^q(Q(x_0,r);\rd),\\
\label{eq:pdevnr}&\pdeor v_{n,r}\to 0\quad\text{strongly in }W^{-1,s}(Q(x_0,r);\rl)\text{ for every }1<s<q
\end{align}
as $n\to +\infty$, and
$$\limn\int_{Q(x_0,r)} g(x,v_{n,r}(x))\,dx\leq \mathcal{I}((u,v), Q(x_0,r))+r^{N+1}.$$
A change of variables yields
$$\frac{d\mathcal{I}((u,v),\cdot)}{d\mathcal{L}^N}(x_0)\geq \liminf_{r\to 0^{+}}\lim_{n\to +\infty}\iQ g(x_0+ry,v(x_0)+w_{n,r}(y))\,dy,$$
where $$w_{n,r}(y):=v_{n,r}(x_0+ry)-v(x_0)\quad\text{for a.e. }y\in Q.$$
Arguing as in \cite[Proof of Lemma 3.5]{braides.fonseca.leoni}, H\"older's inequality and a change of variables imply 
\be{eq:wnr1}w_{n,r}\wk 0\quad\text{weakly in }L^q(Q;\rd)\ee
as $n\to +\infty$ and $r\to 0^+$, in this order. We claim that
\be{eq:wnr2}\pdeor(x_0+r\cdot)w_{n,r}\to 0\quad\text{strongly in }W^{-1,s}(Q;\rl),\ee
as $n\to +\infty$, for every $r$ and every $1<s<q$.

Indeed, let $\varphi\in W^{1,s'}_0(Q;\rd)$. There holds
\begin{align*}
&\scal{\pdeor(x_0+r\cdot)w_{n,r}}{\varphi}_{W^{-1,s}(Q;\R^l),W^{1,s'}_0(Q;\R^l)}=
-\sum_{i=1}^N \Bigg\{r\iQ\frac{\partial A^i(x_0+ry)}{\partial x_i}v_{n,r}(x_0+ry)\cdot\varphi(y)\,dy\\
&\qquad+\iQ A^i(x_0+ry)v_{n,r}(x_0+ry)\cdot\ \frac{\partial \varphi(y)}{\partial y_i}\,dy\Bigg\}\\
&\quad=-\sum_{i=1}^N\Bigg\{\frac{1}{r^{N-1}}\int_{Q(x_0,r)}\frac{\partial A^i(x)}{\partial x_i}v_{n,r}(x)\cdot \psi_r(x)\,dx+\frac{1}{r^{N-1}}\int_{Q(x_0,r)}A^i(x)v_{n,r}(x)\cdot \frac{\partial \psi_r(x)}{\partial x_i}\,dx\Bigg\}\\
&\quad=\frac{1}{r^{N-1}}\scal{\pdeor v_{n,r}}{\psi_r}_{W^{-1,s}(Q(x_0,r);\R^l),W^{1,s'}_0(Q(x_0,r);\R^l)},
\end{align*}
where $\psi_r(x):=\varphi\big(\frac{x-x_0}{r}\big)$ for a.e. $x\in Q(x_0,r)$. Since $\psi_r\in W^{1,s'}_0(Q(x_0,r);\rd)$ and 
$$\|\psi_r\|_{W^{1,s'}_0(Q(x_0,r);\rd)}\leq C(r)\|\varphi\|_{W^{1,s'}_0(Q;\rd)},$$
we obtain the estimate
$$\|\pdeor(x_0+r\cdot)w_{n,r}\|_{W^{-1,s}(Q;\rl)}\leq C(r)\|\pdeor v_{n,r}\|_{W^{-1,s}(Q(x_0,r);\rl)}.$$
Claim \eqref{eq:wnr2} follows by \eqref{eq:pdevnr}.

In view of \eqref{eq:wnr1} and \eqref{eq:wnr2}, a diagonalization procedure yields a $q-$equiintegrable sequence $\{\hat{w}_k\}\subset L^q(Q;\rd)$ satisfying
\begin{align}
\label{eq:whatk}& \hat{w}_k\wk 0\quad\text{weakly in }L^q(Q;\rd),\\
\label{eq:whatkpde} &\pdeor(x_0+r_k\cdot)\hat{w}_k\to 0\quad\text{strongly in }W^{-1,s}(Q;\rl)\quad\text{for every }1<s<q,
 \end{align}
 and
 \begin{equation}
 \label{eq:whatkbelow}
 \frac{d\mathcal{I}((u,v),\cdot)}{d\mathcal{L}^N}(x_0)\geq \liminf_{k\to +\infty}\iQ g(x_0+r_k y,v(x_0)+\hat{w}_{k}(y))\,dy.
\end{equation}
 For every $\varphi\in W^{1,s'}_0(Q;\R^l)$, $1<s<q$, there holds
 \begin{align*}
&\scal{(\pdeor(x_0+r_k\cdot)-\pdeor(x_0))\hat{w}_k}{\varphi}_{W^{-1,s}(Q;\rl),W^{1,s'}_0(Q;\R^l)}\\
&\quad=-\sum_{i=1}^N\Bigg[r_k \int_Q \frac{\partial A^i(x_0+r_k y)}{\partial x_i}\hat{w}_k(y)\cdot \varphi(y)\,dy+\int_Q (A^i(x_0+r_ky)-A^i(x_0))\hat{w}_k(y)\cdot\frac{\partial \varphi(y)}{\partial y_i}\,dy\Bigg].
 \end{align*}
 
 Thus,
$$\|(\pdeor(x_0+r_k\cdot)-\pdeor(x_0))\hat{w}_k\|_{W^{-1,s}(Q;\rl)}\leq r_k \sum_{i=1}^N \|A^i\|_{W^{1,\infty}(\R^N;\R^{l\times d})}\|\hat{w}_k\|_{L^q(Q;\R^d)}$$
for every $1<s<q$. By \eqref{eq:whatk} and \eqref{eq:whatkpde} we conclude that
 \begin{equation}
 \label{eq:pdefixedwhatk}\pdeor(x_0)\hat{w}_k\to 0\quad\text{strongly in }W^{-1,s}(Q;\rl)\quad\text{for every }1<s<q.
 \end{equation}
 In view of \eqref{eq:whatk} and \eqref{eq:pdefixedwhatk}, an adaptation of \cite[Corollary 3.3]{braides.fonseca.leoni} yields a $q-$equiintegrable sequence $\{w_k\}$ such that
 \begin{align}
 &\nonumber w_k\wk 0\quad\text{weakly in }L^q(Q;\rd),\\
 &\nonumber \iQ w_k(y)\,dy=0\quad\text{for every }k,\\
 &\label{eq:wkker}\pdeor(x_0)w_k=0\quad\text{for every }k,
 \end{align}
 and
 \be{eq:quasi-liminf}\liminf_{k\to +\infty}\iQ g(x_0,v(x_0)+w_k(y))\,dy\leq \liminf_{k\to +\infty}\iQ g(x_0+r_k y,v(x_0)+\hat{w}_k(y))\,dy.\ee
 Finally, by combining \eqref{eq:whatkbelow}, \eqref{eq:wkker}, and \eqref{eq:quasi-liminf}, and by the definition of $\pdeor$-quasiconvex envelope for operators with constant coefficients, we obtain
 \begin{align*}
\frac{d\mathcal{I}((u,v),\cdot)}{d\mathcal{L}^N}(x_0)&\geq \liminf_{k\to +\infty}\iQ g(x_0,v(x_0)+w_k(y))\,dy\\
&=\liminf_{k\to +\infty}\iQ f(x_0, u(x_0), v(x_0)+w_k(y))\,dy\geq Q_{\pdeor(x_0)} f(x_0, u(x_0), v(x_0))
 \end{align*}
 for a.e. $x_0\in\Omega$. This concludes the proof of Claim \eqref{eq:ineq1}.\\
 \emph{Step 4:}\\
 To complete the proof of the theorem we need to show that
 \be{eq:limsup-ineq-relax}
 \frac{d\mathcal{I}((u,v),\cdot)}{d\mathcal{L}^N}(x_0)\leq Q_{\pdeor(x_0)} f(x_0, u(x_0), v(x_0))\quad\text{for a.e. }x_0\in \Omega.
 \ee
 To this aim, let $\mu>0$, and $x_0\in\omega$ be such that \eqref{eq:omega-1} and \eqref{eq:omega-2} hold. Let $w\in C^{\infty}_{\rm per}(\Rn;\rd)$ be such that
 \be{eq:properties-w}\iQ w(y)\,dy=0,\quad\pdeor(x_0)w=0,\ee
 and
 \be{eq:below-inf}
 \iQ f(x_0,u(x_0), v(x_0)+w(y))\,dy\leq Q_{\pdeor(x_0)}f(x_0,u(x_0),v(x_0))+\mu.
 \ee
 Let $\eta\in C^{\infty}_c(\Omega;[0,1])$ be such that $\eta\equiv 1$ in a neighborhood of $x_0$ and let $r$ be small enough so that
 \be{eq:inclusions}
 Q(x_0,r)\subset \{x:\eta(x)=1\}\quad\text{and }Q(x_0,2r)\subset \subset \Omega.
 \ee
 Consider a map $\varphi\in C^{\infty}_c(Q(x_0,r);[0,1])$ satisfying 
 \be{eq:lebesgue-meas-not-one}
 \mathcal{L}^N(Q(x_0,r)\cap\{\varphi\neq 1\})<\mu r^N,
 \ee and define
 \be{eq:defurm}
 z^r_m(x):=\varphi(x)w\Big(\frac{m(x-x_0)}{r}\Big)\quad\text{for }x\in\Rn.
 \ee
 We observe that $z^r_m\in L^q(\Omega;\rd)$, and for $\psi\in L^{q'}(\Omega;\R^d)$ we have
 \begin{align*}
 \int_{\Omega} z^r_m(x)\cdot \psi(x)\,dx&=\int_{\Omega}\varphi(x) w\Big(\frac{m(x-x_0)}{r}\Big)\cdot \psi(x)\,dx\\
 &=r^N \int_Q \varphi(x_0+ry)w(my)\cdot \psi(x_0+ry)\,dy.
 \end{align*}
 By \eqref{eq:properties-w} and by the Riemann-Lebesgue lemma we have
 \be{eq:wkconvurm}
 z^r_m\wk 0\quad\text{weakly in }L^q(\Omega;\rd)
 \ee
 as $m\to +\infty$. We claim that
 \be{eq:estimate-r}
 \limsup_{m\to +\infty}\|\pdeor_{\eta}z^r_m\|_{W^{-1,q}(\Omega;\R^l)}\leq Cr^{\frac{N}{q}+1},
 \ee
 where $\pdeor_{\eta}$ is the pseudo-differential operator defined in \eqref{eq:def-A-eta}. Indeed, by \eqref{eq:inclusions} we obtain
 \begin{align}
 \label{eq:proof-estimate-r} \pdeor_{\eta}z^r_m&=\pdeor z^r_m-\pdeor(x_0)z^r_m+\pdeor(x_0)z^r_m\\
&\nonumber=\sum_{i=1}^N \frac{\partial((A^i(x)-A^i(x_0))z^r_m(x))}{\partial x_i}+\sum_{i=1}^NA^i(x_0)\frac{\partial z^r_m(x)}{\partial x_i}-\sum_{i=1}^N\frac{\partial A^i(x)}{\partial x_i}z^r_m(x).
  \end{align}
  By the regularity of the operators $A^i$ and by a change of variables, the first term in the right-hand side of \eqref{eq:proof-estimate-r} is estimated as 
  \begin{align}
 & \label{eq:estimate-first-term}
  \Big\|\sum_{i=1}^N \frac{\partial((A^i(x)-A^i(x_0))z^r_m(x))}{\partial x_i}\Big\|_{W^{-1,q}(\Omega;\rl)}\\
 &\nonumber \leq \sum_{i=1}^N\Big\|(A^i(x)-A^i(x_0))\varphi(x)w\Big(\frac{m(x-x_0)}{r}\Big)\Big\|_{L^q(Q(x_0,r);\rl)}\\
 &\nonumber \leq \sum_{i=1}^N\|A^i\|_{W^{1,\infty}(\R^N;\R^{l\times d})}\|\varphi\|_{L^{\infty}(Q(x_0,r))}\|w(m\cdot)\|_{L^q(Q;\R^d)}r^{\frac{N}{q}+1}\leq Cr^{\frac{N}{q}+1}.
  \end{align}
  In view of \eqref{eq:properties-w} the second term in the right-hand side of \eqref{eq:proof-estimate-r} becomes
$$\sum_{i=1}^NA^i(x_0)\frac{\partial z^r_m(x)}{\partial x_i}=\sum_{i=1}^N A^i(x_0)\frac{\partial \varphi(x)}{\partial x_i}w\Big(\frac{m(x-x_0)}{r}\Big),$$
 and thus converges to zero weakly in $L^q(\Omega;\rl)$, as $m\to +\infty$, due to \eqref{eq:properties-w} and by the Riemann-Lebesgue lemma. Hence, 
  \be{eq:estimate-second-term}
  \Big\|\sum_{i=1}^NA^i(x_0)\frac{\partial z^r_m(x)}{\partial x_i}\Big\|_{W^{-1,q}(\Omega;\rl)}\to 0\quad\text{as }m\to +\infty
  \ee
  by the compact embedding of $L^q(\Omega;\rl)$ into $W^{-1,q}(\Omega;\rl)$. Finally, the third term in the right-hand side of \eqref{eq:proof-estimate-r} satisfies
  $$\sum_{i=1}^N\frac{\partial A^i(x)}{\partial x_i}z^r_m(x)=\sum_{i=1}^N\frac{\partial A^i(x)}{\partial x_i}\varphi(x)w\Big(\frac{m(x-x_0)}{r}\Big),$$
  which again converges to zero weakly in $L^q(\Omega;\rl)$, as $m\to +\infty$, owing again to \eqref{eq:properties-w} and the Riemann-Lebesgue lemma. Therefore,
  \be{eq:estimate-third-term}
  \Big\|\sum_{i=1}^N\frac{\partial A^i(x)}{\partial x_i}z^r_m(x)\Big\|_{W^{-1,q}(\Omega;\rl)}\to 0\quad\text{as }m\to +\infty.
  \ee
  Claim \eqref{eq:estimate-r} follows by combining \eqref{eq:estimate-first-term}--\eqref{eq:estimate-third-term}.
  
   Consider the maps 
   $$v^r_m:=P_{\eta}z^r_m,$$
   where $P_{\eta}$ is the projection operator introduced in \eqref{eq:def-P-eta}.
   By Proposition \ref{prop:santos-properties} we have
  \begin{align}
  &\label{eq:rev-star3}\|v^r_m\|_{L^q(Q(x_0,r);\rd)}\leq C\|z^r_m\|_{L^q(\Omega;\rd)},\\
  &\label{eq:rev-star}\|v^r_m\|_{W^{-1,q}(Q(x_0,r);\rd)}\leq C\|z^r_m\|_{W^{-1,q}(\Omega;\rd)},\\
  &\label{eq:rev-star2}\|\pdeor_{\eta}v^r_m\|_{W^{-1,q}(Q(x_0,r);\rl)}\leq C\|z^r_m\|_{W^{-1,q}(\Omega;\rd)},\\
  &\label{eq:rev-star4}\|v^r_m-z^r_m\|_{L^q(Q(x_0,r);\rd)}\leq C(\|\pdeor_{\eta}z^r_m\|_{W^{-1,q}(\Omega;\rl)}+\|z^r_m\|_{W^{-1,q}(\Omega;\R^d)}).
  \end{align}
  By \eqref{eq:wkconvurm} and \eqref{eq:rev-star3}, the sequence $\{v^r_m\}$ is uniformly bounded in $L^q(Q(x_0,r);\rd)$. Thus, there exists a map $v^r\in L^q(Q(x_0,r);\rd)$ such that, up to the extraction of a (not relabelled) subsequence, 
  \be{eq:rev-starstar}
  v^r_m\wk v^r\quad\text{weakly in }L^q(Q(x_0,r);\rd)
  \ee
  as $m\to +\infty$. Again by \eqref{eq:wkconvurm}, and by the compact embedding of $L^q$ into $W^{-1,q}$, we deduce that
  \be{eq:rev-point}
  z^r_m\to 0\quad\text{strongy in }W^{-1,q}(\Omega;\rd)
  \ee
  as $m\to +\infty$. Therefore, by combining \eqref{eq:rev-star} and \eqref{eq:rev-starstar}, we conclude that
  $$v^r_m\wk 0\quad\text{weakly in }L^q(Q(x_0,r);\rd)$$
  as $m\to +\infty$, and the convergence holds for the entire sequence. Additionally, by  \eqref{eq:inclusions}, \eqref{eq:rev-star2}, and \eqref{eq:rev-point}, we obtain
 $$\pdeor v^r_m=\pdeor_{\eta}v^r_m\to 0\quad\text{strongly in }W^{-1,q}(Q(x_0,r);\rl)$$
as $m\to +\infty$. Finally, by \eqref{eq:estimate-r}, \eqref{eq:rev-star4}, and \eqref{eq:rev-point}, there holds
\be{eq:rev-pointpoint}
\lim_{r\to 0}\lim_{m\to +\infty}r^{-\tfrac Nq}\|v^r_m-z^r_m\|_{L^q(Q(x_0,r);\rd)}=0.
\ee

  We recall that, since $x_0$ satisfies \eqref{eq:omega-2}, Step 1 yields
 \be{eq:NUM}\frac{d\mathcal{I}(u,v)}{d\mathcal{L}^N}(x_0)=\lim_{r\to 0^+}\frac{\mathcal{I}((u,v);Q(x_0,r))}{r^N}
  \leq \liminf_{r\to 0^+}\liminf_{m\to +\infty} \frac{1}{r^N}\int_{Q(x_0,r)}f(x,u(x),v(x)+v^r_m(x))\,dx.\ee
  We claim that
\be{eq:NUM-claim}\frac{d\mathcal{I}(u,v)}{d\mathcal{L}^N}(x_0)=\lim_{r\to 0^+}\frac{\mathcal{I}((u,v);Q(x_0,r))}{r^N}
  \leq \liminf_{r\to 0^+}\liminf_{m\to +\infty} \frac{1}{r^N}\int_{Q(x_0,r)}g(x,v(x)+z^r_m(x))\,dx,\ee
 where $g$ is the function introduced in Step 3. Indeed, for every $r\in \R$, consider the function $g^r:Q\times \R^d\to [0,+\infty)$ defined as
 $$g^r(y,\xi):=g(x_0+ry,\xi)\quad\text{for every }y\in Q,\xi\in \R^d.$$
 Since $x_0\in \omega$, by \eqref{eq:defsetomega} there exists $K_j$ such that $x_0\in K_j$. In particular, this yields the existence of $r_0>0$ such that for $r\leq r_0$, the maps $g^r$ are continuous on $Q\times \R^d$, and the family $\{g^r(y,\cdot)\}$ is equicontinuous in $\R^d$, uniformly with respect to $y$.
 A change of variables yields
 \begin{align*}
 &\frac{1}{r^N}\Big|\int_{Q(x_0,r)}f(x,u(x),v(x)+v^r_m(x))\,dx-\int_{Q(x_0,r)}f(x,u(x),v(x)+z^r_m(x))\,dx\Big|\\
 &=\Big|\int_{Q}g^r(y,v(x_0+ry)+v^r_m(x_0+ry))\,dy-\int_{Q}g^r(y,v(x_0+ry)+z^r_m(x_0+ry))\,dy\Big|.
 \end{align*}
 On the other hand, by \eqref{eq:rev-pointpoint} we have
 $$\lim_{r\to 0}\lim_{m\to +\infty}\|z^r_m(x_0+r\cdot)-v^r_m(x_0+r\cdot)\|_{L^q(Q;\R^d)}=\lim_{r\to 0}\lim_{m\to +\infty}r^{-\frac{N}{q}}\|z^r_m-v^r_m\|_{L^q(Q(x_0,r);\R^d)}= 0.$$
Therefore, by a diagonal procedure we extract a subsequence $\{m_r\}$ such that
\begin{align}
&\label{eq:newnew}
\limsup_{r\to 0}\limsup_{m\to +\infty}\Big|\int_{Q}g^r(y,v(x_0+ry)+v^r_m(x_0+ry))\,dy-\int_{Q}g^r(y,v(x_0+ry)+z^r_m(x_0+ry))\,dy\Big|\\
&\nonumber=\lim_{r\to 0}\Big|\int_{Q}g^r(y,v(x_0+ry)+v^r_{m_r}(x_0+ry))\,dy-\int_{Q}g^r(y,v(x_0+ry)+z^r_{m_r}(x_0+ry))\,dy\Big|,
\end{align}
 and
 $$z^r_{m_r}(x_0+r\cdot)-v^r_{m_r}(x_0+r\cdot)\to 0\quad\text{strongly in }L^q(Q;\R^d).$$
 In view of \eqref{eq:omega-1}, \eqref{eq:defurm} and the Riemann-Lebesgue lemma, the sequence $\{v(x_0+r\cdot)+z^r_{m_r}(x_0+r\cdot)\}$ is $q$-equiintegrable in $Q$. Hence, by (H) we are under the assumptions of Proposition \ref{prop:same-limit}, and we conclude that
 \be{eq:newnewnew}
 \lim_{r\to 0}\Big|\int_{Q}g^r(y,v(x_0+ry)+v^r_{m_r}(x_0+ry))\,dy-\int_{Q}g^r(y,v(x_0+ry)+z^r_{m_r}(x_0+ry))\,dy\Big|=0.
 \ee
 Claim \eqref{eq:NUM-claim} follows by combining \eqref{eq:newnew} with \eqref{eq:newnewnew}.

  Arguing as in \cite[Proof of Lemma 3.5]{braides.fonseca.leoni}, for every $x_0\in\omega$ (where $\omega$ is the set defined in \eqref{eq:defsetomega}) we have
  \begin{align*}
  &\liminf_{r\to 0^+}\liminf_{m\to +\infty} \frac{1}{r^N}\int_{Q(x_0,r)}f(x,u(x),v(x)+z^r_m(x))\,dx\\
  &\quad\leq \liminf_{r\to 0^+}\liminf_{m\to +\infty} \frac{1}{r^N}\int_{Q(x_0,r)}f(x_0,u(x_0),v(x_0)+z^r_m(x))\,dx,
  \end{align*}
  hence by \eqref{eq:NUM-claim} we deduce that
  $$\frac{d\mathcal{I}(u,v)}{d\mathcal{L}^N}(x_0)\leq \liminf_{r\to 0^+}\liminf_{m\to +\infty} \frac{1}{r^N}\int_{Q(x_0,r)}f(x_0,u(x_0),v(x_0)+z^r_m(x))\,dx.$$
  By \eqref{eq:defurm} we obtain
    \begin{align*}
  \frac{d\mathcal{I}(u,v)}{d\mathcal{L}^N}(x_0)&\leq \liminf_{r\to 0^+}\liminf_{m\to +\infty}\frac{1}{r^N}\int_{Q(x_0,r)}f(x_0,u(x_0),v(x_0)+z^r_m(x))\,dx\\
  &\leq \liminf_{r\to 0^+}\liminf_{m\to +\infty}\frac{1}{r^N}\Bigg\{\int_{Q(x_0,r)}f\Big(x_0,u(x_0),v(x_0)+w\Big(\frac{m(x-x_0)}{r}\Big)\Big)\,dx\\
  &\qquad+\int_{Q(x_0,r)\cap\{\varphi\neq 1\}}f\Big(x_0,u(x_0),v(x_0)+\varphi(x)w\Big(\frac{m(x-x_0)}{r}\Big)\Big)\,dx\Bigg\}.
  \end{align*}
 The growth assumption (H) and estimate \eqref{eq:lebesgue-meas-not-one} yield
  \begin{align}
  &\label{eq:a-f}\int_{Q(x_0,r)\cap\{\varphi\neq 1\}}f\Big(x_0,u(x_0),v(x_0)+\varphi(x)w\Big(\frac{m(x-x_0)}{r}\Big)\Big)\,dx\\
  &\nonumber\quad\leq C\int_{Q(x_0,r)\cap\{\varphi\neq 1\}}\Big(1+\Big|w\Big(\frac{m(x-x_0)}{r}\Big)\Big|^q\Big)\,dx\\
  &\nonumber\quad\leq C(1+\|w\|^q_{L^{\infty}(\Rn;\rd)})\mathcal{L}^N(Q(x_0,r)\cap\{\varphi\neq 1\})\leq C\mu r^N.
  \end{align}
  Thus, by \eqref{eq:a-f}, the periodicity of $w$, and Riemann-Lebesgue lemma, we deduce 
  \begin{align*}
   \frac{d\mathcal{I}(u,v)}{d\mathcal{L}^N}(x_0)&\leq C\mu+\liminf_{r\to 0^+}\liminf_{m\to +\infty}\frac{1}{r^N}\int_{Q(x_0,r)}f\Big(x_0,u(x_0),v(x_0)+w\Big(\frac{m(x-x_0)}{r}\Big)\Big)\,dx\\
&=C\mu+\liminf_{m\to +\infty}\int_{Q}f(x_0,u(x_0),v(x_0)+w(my))\,dy\\
&=C\mu+\int_{Q}f(x_0,u(x_0),v(x_0)+w(y))\,dy\\
&\leq C\mu+ Q_{\pdeor(x_0)}f(x_0,u(x_0),v(x_0)),
  \end{align*}
  where the last inequality is due to \eqref{eq:below-inf}. Letting $\mu\to 0^+$ we conclude \eqref{eq:limsup-ineq-relax}.
\end{proof}  
\section*{Acknowledgements}
The authors thank the Center for Nonlinear Analysis (NSF Grant No. DMS-0635983), where this research was carried out, and also acknowledge support of the National Science Foundation under the PIRE Grant No. OISE-0967140. The research of I. Fonseca and E. Davoli was funded by the National Science Foundation under Grant No. DMS- 0905778. E. Davoli acknowledges the support of the Austrian Science Fund (FWF) projects P 27052 and I 2375. The research of I. Fonseca was further partially supported by the National Science Foundation under Grant No. DMS-1411646.
\bibliographystyle{plain}
\bibliography{ed}

\begin{thebibliography}{10}

\bibitem{arroyo}
Adolfo Arroyo-Rabasa.
\newblock Relaxation and optimization for linear-growth convex integral
  functionals under {PDE} constraints.
\newblock {\em arXiv:1603.01310}, 2016.

\bibitem{arroyo.dephilippis.rindler}
Adolfo Arroyo-Rabasa, Guido De~Philippis, and Filip Rindler.
\newblock Lower semicontinuity and relaxation of linear-growth integral
  functionals under pde constraints.
\newblock {\em arXiv:1701.02230}, 2017.

\bibitem{attouch}
Hedy Attouch.
\newblock {\em Variational convergence for functions and operators}.
\newblock Applicable Mathematics Series. Pitman (Advanced Publishing Program),
  Boston, MA, 1984.

\bibitem{baia.chermisi.matias.santos}
Margarida Ba\'ia, Milena Chermisi, Jos\'e Matias, and Pedro~M. Santos.
\newblock Lower semicontinuity and relaxation of signed functionals with linear
  growth in the context of {$\mathscr{A}$}-quasiconvexity.
\newblock {\em Calc. Var. Partial Differential Equations}, 47(3-4):465--498,
  2013.

\bibitem{benesova.kruzik}
Barbora Bene\v{s}ov\'a and Martin Kru\v{z}\'ik.
\newblock {\em Weak lower semicontinuity of integral functionals and
  applications}.
\newblock arXiv:1601.00390v4, 2016.

\bibitem{braides.fonseca.leoni}
Andrea Braides, Irene Fonseca, and Giovanni Leoni.
\newblock {$\mathscr A$}-quasiconvexity: relaxation and homogenization.
\newblock {\em ESAIM Control Optim. Calc. Var.}, 5:539--577 (electronic), 2000.

\bibitem{chiodaroli.feireisl.kreml.wiedemann}
Elisabetta Chiodaroli, Eduard Feireisl, Ond{\v{r}}ej Kreml, and Emil Wiedemann.
\newblock $\mathscr{A}$-free rigidity and applications to the compressible
  {E}uler system.
\newblock {\em Annali di Matematica Pura ed Applicata (1923 -)}, pages 1--16,
  2017.

\bibitem{dacorogna}
Bernard Dacorogna.
\newblock {\em Weak continuity and weak lower semicontinuity of nonlinear
  functionals}, volume 922 of {\em Lecture Notes in Mathematics}.
\newblock Springer-Verlag, Berlin-New York, 1982.

\bibitem{fonseca.dacorogna}
Bernard Dacorogna and Irene Fonseca.
\newblock A-{B} quasiconvexity and implicit partial differential equations.
\newblock {\em Calc. Var. Partial Differential Equations}, 14(2):115--149,
  2002.

\bibitem{davoli.fonseca}
Elisa Davoli and Irene Fonseca.
\newblock Homogenization of integral energies under periodically oscillating
  differential constraints.
\newblock {\em Calc. Var. Partial Differential Equations}, 55(3):1--60, 2016.

\bibitem{davoli.fonseca2}
Elisa Davoli and Irene Fonseca.
\newblock Periodic homogenization of integral energies under space-dependent
  differential constraints.
\newblock {\em Port. Math.}, 73(4):279--317, 2016.

\bibitem{dephilippis.rindler}
Guido De~Philippis and Filip Rindler.
\newblock On the structure of {$\mathcal{A}$}-free measures and applications.
\newblock {\em Ann. of Math. (2)}, 184(3):1017--1039, 2016.

\bibitem{fonseca.kromer}
Irene Fonseca and Stefan Kr{\"o}mer.
\newblock Multiple integrals under differential constraints: two-scale
  convergence and homogenization.
\newblock {\em Indiana Univ. Math. J.}, 59(2):427--457, 2010.

\bibitem{fonseca.kruzik}
Irene Fonseca and Martin Kru\v{z}\'ik.
\newblock Oscillations and concentrations generated by {$\mathscr{A}$}-free
  mappings and weak lower semicontinuity of integral functionals.
\newblock {\em ESAIM Control Optim. Calc. Var.}, 16(2):472--502, 2010.

\bibitem{fonseca.muller.leoni}
Irene Fonseca, Giovanni Leoni, and Stefan M\"uller.
\newblock {$\mathscr{A}$}-quasiconvexity: weak-star convergence and the gap.
\newblock {\em Ann. Inst. H. Poincar\'e Anal. Non Lin\'eaire}, 21(2):209--236,
  2004.

\bibitem{fonseca.muller93}
Irene Fonseca and Stefan M\"uller.
\newblock Relaxation of quasiconvex functionals in {${\rm BV}(\Omega,{\bf
  R}^p)$} for integrands {$f(x,u,\nabla u)$}.
\newblock {\em Arch. Rational Mech. Anal.}, 123(1):1--49, 1993.

\bibitem{fonseca.muller}
Irene Fonseca and Stefan M{\"u}ller.
\newblock {$\mathscr A$}-quasiconvexity, lower semicontinuity, and {Y}oung
  measures.
\newblock {\em SIAM J. Math. Anal.}, 30(6):1355--1390 (electronic), 1999.

\bibitem{kramer.kromer.kruzik.patho}
Jan Kr\"amer, Stefan Kr\"omer, Martin Kru\v{z}\'ik, and Gabriel Path\'o.
\newblock {$\mathscr{A}$}-quasiconvexity at the boundary and weak lower
  semicontinuity of integral functionals.
\newblock {\em Adv. Calc. Var.}, 10(1):49--67, 2017.

\bibitem{kreisbeck.kromer}
Carolin Kreisbeck and Stefan Kr\"omer.
\newblock Heterogeneous thin films: combining homogenization and dimension
  reduction with directors.
\newblock {\em SIAM J. Math. Anal.}, 48(2):785--820, 2016.

\bibitem{kreisbeck.rindler}
Carolin Kreisbeck and Filip Rindler.
\newblock Thin-film limits of functionals on {$\mathscr{A}$}-free vector
  fields.
\newblock {\em Indiana Univ. Math. J.}, 64(5):1383--1423, 2015.

\bibitem{matias.morandotti.santos}
Jos\'e Matias, Marco Morandotti, and Pedro~M. Santos.
\newblock Homogenization of functionals with linear growth in the context of
  {$\mathcal{A}$}-quasiconvexity.
\newblock {\em Appl. Math. Optim.}, 72(3):523--547, 2015.

\bibitem{murat}
Fran{\c{c}}ois Murat.
\newblock Compacit\'e par compensation: condition n\'ecessaire et suffisante de
  continuit\'e faible sous une hypoth\`ese de rang constant.
\newblock {\em Ann. Scuola Norm. Sup. Pisa Cl. Sci. (4)}, 8(1):69--102, 1981.

\bibitem{santos}
Pedro~M. Santos.
\newblock {$\mathscr A$}-quasi-convexity with variable coefficients.
\newblock {\em Proc. Roy. Soc. Edinburgh Sect. A}, 134(6):1219--1237, 2004.

\end{thebibliography}
\end{document}